\newtheorem{theorem}{Theorem}[section]
\newtheorem{lemma}[theorem]{Lemma}
\newtheorem{conjecture}[theorem]{Conjecture}
\newcommand{\claimproofend}{\hspace*{.1mm}\hspace{\fill}}
\newcommand{\HH}{\mathcal H}
\newcommand{\size}[1]{\left|#1\right|}
\newcommand\Gr[1] {\mathrm{Gr}(#1)}
\newcommand\IG[1] {\mathrm{IG}(#1)}
\begin{document}
\title{\textbf{Hamilton cycles\\in line graphs of 3-hypergraphs}}
\author{Tom\'a\v s Kaiser$^{\:1}$\and Petr Vr\'{a}na$^{\:1}$}
\date{}

\maketitle
\begin{abstract}
  We prove that every $52$-connected line graph of a rank $3$
  hypergraph is Hamiltonian. This is the first result of this type for
  hypergraphs of bounded rank other than ordinary graphs.
\end{abstract}
\footnotetext[1]{Department of Mathematics and European Centre of
  Excellence NTIS (New Technologies for the Information Society),
  University of West Bohemia, Pilsen, Czech Republic. E-mail:
  \texttt{\{kaisert,vranap\}@kma.zcu.cz}. Supported by project GA20-09525S of
  the Czech Science Foundation.}%

\section{Introduction}
\label{sec:introduction}

We refer the reader to Section~\ref{sec:preliminaries} for any
definitions not included in this introduction.

It is easy to see that there are graphs of arbitrarily high
vertex-connectivity that do not admit a Hamilton cycle. On the other
hand, in some classes of graphs, sufficient connectivity implies
Hamiltonicity. One example is the class of planar graphs
($4$-connected planar graphs are Hamiltonian by a classic result of
Tutte~\cite{Tu}). For claw-free graphs, a conjecture of Matthews and
Sumner~\cite{MS} states that vertex-connectivity greater than or equal
to $4$ is sufficient as well.

\begin{conjecture}\label{conj:ms}
  Every $4$-connected claw-free graph is Hamiltonian.
\end{conjecture}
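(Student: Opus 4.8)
The plan is to reach the conjecture through the standard reduction of claw-free graphs to line graphs, followed by the theory of dominating closed trails. First I would apply Ryj\'a\v cek's closure operation: for any claw-free graph $G$, its closure $\mathrm{cl}(G)$ is the line graph of some triangle-free graph, the closure never decreases connectivity, and $G$ is Hamiltonian if and only if $\mathrm{cl}(G)$ is. Since $4$-connectivity is preserved, this reduces Conjecture~\ref{conj:ms} to the equivalent statement, conjectured by Thomassen, that every $4$-connected line graph $L(H)$ is Hamiltonian.

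Next I would translate the problem back to the preimage graph $H$ via the classical Harary--Nash-Williams criterion: for $H$ with at least three edges, $L(H)$ has a Hamilton cycle if and only if $H$ contains a \emph{dominating closed trail}, a closed trail $T$ such that every edge of $H$ has an endpoint on $T$. Under this correspondence the $4$-connectivity of $L(H)$ becomes \emph{essential $4$-edge-connectivity} of $H$, meaning that every edge cut of size at most $3$ has one side containing no edge. Thus the entire conjecture reduces to the purely structural claim that every essentially $4$-edge-connected graph admits a dominating closed trail.

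To build such a trail I would rely on Catlin's reduction theory. The idea is to contract every maximal collapsible subgraph of $H$, obtaining a reduced graph $H'$ with the property that a spanning Eulerian subgraph of $H'$ lifts to a dominating closed trail of $H$. Catlin's theorem that $4$-edge-connected graphs are collapsible — equivalently Jaeger's theorem that they are supereulerian — then furnishes the trail, provided one can show that the reduction preserves enough connectivity. The strategy would therefore be to prove that contracting collapsible pieces upgrades the essential $4$-edge-connectivity of $H$ to a condition on $H'$ strong enough to force a spanning Eulerian subgraph, the smallest reduced graph being a single vertex $K_1$.

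The hard part — and the reason this remains one of the central open problems of the field — is exactly this final step. Essential $4$-edge-connectivity is much weaker than genuine $4$-edge-connectivity, and the reduced graph $H'$ may still carry small essential cuts and vertices of degree $3$ that block any direct appeal to the supereulerian machinery. Overcoming this obstruction seems to demand a delicate local analysis around the degree-$3$ vertices of $H$, precisely the configuration that the strongest known partial results can handle only under additional hypotheses such as higher connectivity together with a minimum-degree bound. I would expect that any successful proof must replace the global reduction argument by a refined treatment of these low-degree vertices, perhaps through a careful study of stable or contractible vertex sets inside $L(H)$ itself.
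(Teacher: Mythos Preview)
The statement you are trying to prove is Conjecture~\ref{conj:ms}, which the paper explicitly declares open immediately after stating it. There is therefore no proof in the paper to compare against; the paper merely records the conjecture and cites Theorem~\ref{t:kv} as the best known partial result.

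Your proposal is not a proof either, and you essentially say so yourself. The chain of reductions you describe --- Ryj\'a\v cek closure to pass from claw-free graphs to line graphs, the Harary--Nash-Williams equivalence with dominating closed trails, and the translation of $4$-connectivity of $L(H)$ into essential $4$-edge-connectivity of $H$ --- is accurate and standard. But at the decisive point, where one would need to show that every essentially $4$-edge-connected graph has a dominating closed trail, you invoke Catlin's reduction and then concede that the reduced graph need not be $4$-edge-connected, so the supereulerian machinery does not apply. That concession is the entire difficulty: the conjecture is equivalent to precisely this statement about dominating closed trails, and no one knows how to prove it. Your final paragraph is a fair description of why the problem is hard, but it is not a proof step.

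In short, the gap is not a missing detail but the whole problem. What you have written is a correct reduction of an open conjecture to an equivalent open conjecture, followed by an acknowledgement that it remains open.
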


Conjecture~\ref{conj:ms} is open, with the following being currently
the best general result of this form.

\begin{theorem}[\cite{KV}]\label{t:kv}
  All $6$-connected claw-free graphs are Hamiltonian.
\end{theorem}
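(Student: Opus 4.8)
The plan is to reduce the statement, through a chain of standard translations, to a structural problem about sparse graphs, and then to resolve that problem by Catlin's reduction method together with a detailed case analysis. \textbf{First}, I would invoke Ryj\'a\v{c}ek's closure operation: the closure $\mathrm{cl}(G)$ of a claw-free graph $G$ is the line graph of a triangle-free graph, it is at least as connected as $G$ (it contains $G$ as a spanning subgraph), and $G$ is Hamiltonian if and only if $\mathrm{cl}(G)$ is. It therefore suffices to prove that every $6$-connected line graph $L(H)$ is Hamiltonian. \textbf{Second}, by the theorem of Harary and Nash-Williams, for a connected graph $H$ with $|E(H)|\ge 3$ the line graph $L(H)$ is Hamiltonian if and only if $H$ contains a \emph{dominating closed trail}, that is, a closed trail meeting at least one end of every edge of $H$; tiny cases are trivial, so the goal becomes the existence of such a trail in $H$.

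\textbf{Third}, I would translate the hypothesis into edge-connectivity: if $L(H)$ is $6$-connected then $H$ has no \emph{essential} edge cut of size less than $6$ (an edge cut being essential if its removal leaves at least two components that contain an edge), and moreover $\deg_H(u)+\deg_H(v)\ge 8$ for every edge $uv$. Replacing $H$ by its \emph{core} (delete degree-$1$ vertices, suppress maximal paths through degree-$2$ vertices, and remember which edges each suppressed path is still obliged to dominate) yields a graph of minimum degree at least $3$ that is still essentially $6$-edge-connected and such that a suitably constrained dominating closed trail of it lifts to one of $H$. The task thus becomes: find such a trail in an essentially $6$-edge-connected graph of minimum degree at least $3$.

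\textbf{Fourth} comes the combinatorial heart. A \emph{supereulerian} graph --- one with a spanning closed trail --- trivially has a dominating closed trail, so I would apply Catlin's reduction method: contract every maximal collapsible subgraph of $H$ to obtain the \emph{reduction} $H'$. If $H'$ is a single vertex then $H$ is collapsible, hence supereulerian, and we are done; otherwise $H'$ is a nontrivial reduced graph. Reduced graphs are sparse --- triangle-free, free of multiple edges, with no dense subgraph --- so it remains to show that a sparse but essentially highly-edge-connected graph $H'$ is, after all, trivial, or else admits the required trail. Essential $6$-edge-connectivity forces each low-degree vertex of $H'$ to be the image of a collapsible piece joined to the rest of $H$ by many edges, which constrains $H'$ severely; one then pushes this through a discharging argument and a case analysis over the local configurations at vertices of degree $2$, $3$, $4$ and $5$ of $H'$, building the dominating closed trail piece by piece and lifting it back to $H$ by using that every contracted piece, being collapsible, contains a spanning connected subgraph with any prescribed even-sized set of odd-degree vertices.

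\textbf{Main obstacle.} The entire difficulty is in this last step: forcing the interplay between the sparsity of the reduction and essential $6$-edge-connectivity to yield either a contradiction or an explicit trail, while keeping the case analysis finite and correct. The threshold $6$, rather than the conjectured $4$, is exactly the value at which this accounting closes up; lowering it would require a genuinely new idea and would settle Conjecture~\ref{conj:ms}.
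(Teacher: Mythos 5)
This statement is not proved in the paper at all: it is quoted as Theorem~\ref{t:kv} from the reference~\cite{KV} (``Hamilton cycles in 5-connected line graphs''), whose actual main result is that every $5$-connected line graph with minimum degree at least $6$ is Hamiltonian; the claw-free version follows by Ryj\'a\v{c}ek's closure exactly as in your first step. So there is no in-paper proof to compare against, and your proposal has to be judged against the argument of~\cite{KV} itself.

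Your first three steps (closure, the Harary--Nash-Williams translation to dominating closed trails, essential edge-connectivity and passing to the core) are standard and correct, and they do match the setup of~\cite{KV}. The gap is your fourth step, which is the entire theorem. You assert that Catlin's reduction method plus ``a discharging argument and a case analysis over the local configurations at vertices of degree $2,3,4,5$'' closes up exactly at threshold $6$. This is unsubstantiated and, as far as the literature goes, not how the bound $6$ was ever reached: the Catlin/collapsible-subgraph route is what gives Zhan's $7$-connected theorem (via the fact that $4$-edge-connected graphs have two edge-disjoint spanning trees) and various results assuming essential edge-connectivity around $10$--$11$, or $6$-connectivity with extra hypotheses bounding the number of degree-$6$ vertices. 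The obstruction is real: the reduction of an essentially $6$-edge-connected core can be a large nontrivial reduced graph with many degree-$3$ vertices, and no discharging scheme is known (or claimed in~\cite{KV}) that disposes of it. What~\cite{KV} actually does at this point is introduce a genuinely new tool: it encodes the attachments of the contracted/suppressed pieces as hyperedges of an auxiliary $3$-hypergraph and proves a new packing-type theorem about spanning quasitrees (spanning connected substructures with prescribed parity behaviour) in suitably partition-connected $3$-hypergraphs, which is then lifted back to a dominating closed trail of $H$. Without that ingredient --- or a worked-out replacement for your ``case analysis'' --- the proposal does not constitute a proof; it reduces the theorem to an unsolved version of itself.
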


If we restrict Conjecture~\ref{conj:ms} to line graphs (which form a
subclass of the class of claw-free graphs), we obtain the following
conjecture of Thomassen~\cite{Th}. (See also~\cite{BRV} for an
extensive account of problems related to Conjectures~\ref{conj:ms}
and~\ref{conj:thomassen}.)

\begin{conjecture}\label{conj:thomassen}
  Every $4$-connected line graph is Hamiltonian.
\end{conjecture}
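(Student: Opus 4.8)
The plan is to translate the problem into the root graph via the Harary--Nash-Williams correspondence: if $H=L(G)$ and $G$ has at least three edges, then $H$ is Hamiltonian if and only if $G$ carries a \emph{dominating closed trail}, i.e.\ a closed trail $T$ such that every edge of $G$ meets $V(T)$. By Ryj\'a\v cek's closure the general claw-free case of Conjecture~\ref{conj:ms} reduces to the line-graph case, so it is enough to work entirely inside $G$. First I would record the connectivity dictionary: when $\kappa(L(G))\ge 4$ the root graph $G$ is essentially $4$-edge-connected, meaning every edge cut of size at most $3$ isolates an edge. A short preprocessing step then suppresses all vertices of degree $2$, reducing to the situation of an essentially $4$-edge-connected graph of minimum degree at least $3$ in which a dominating closed trail is sought.

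Next I would apply Catlin's reduction method. Contracting every maximal collapsible subgraph yields the \emph{reduced graph} $G'$; since a collapsible subgraph can be threaded by a trail joining any prescribed even vertex set, a dominating closed trail of $G$ corresponds to a dominating Eulerian structure of $G'$, and $G'$ contains no nontrivial collapsible subgraph (in particular no triangle). On the part of $G'$ where every vertex has degree at least $4$ and the local edge-connectivity is $4$, Jaeger's theorem (every $4$-edge-connected graph is supereulerian, sharpened by Catlin to collapsibility) supplies the required trail directly, so the whole difficulty becomes concentrated on the remaining low-degree part.

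The main obstacle, and the reason Conjecture~\ref{conj:thomassen} is still open, is the vertices of degree $3$: essential $4$-edge-connectivity does not upgrade to genuine $4$-edge-connectivity in their presence, and $3$-edge-connectivity alone does not suffice, as the Petersen graph --- cubic, $3$-edge-connected, and not supereulerian --- shows. To overcome this I would import the \emph{weak-pairs} and closure machinery behind Theorem~\ref{t:kv}: attach to each degree-$3$ vertex a controlled local configuration and use a global parity/flow argument to splice the Eulerian fragments across these configurations into a single dominating closed trail. The heart of the matter is to make this splicing succeed at connectivity exactly $4$ rather than at the value $6$ afforded by Theorem~\ref{t:kv}; closing that gap --- from $6$ down to $4$ --- is precisely the step I expect to be hardest, and where any honest attack on the conjecture must do genuinely new work.
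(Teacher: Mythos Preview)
The statement you are addressing is a \emph{conjecture}, not a theorem, and the paper does not contain a proof of it --- it is quoted as an open problem (Thomassen's conjecture), equivalent by Ryj\'a\v cek's closure to Conjecture~\ref{conj:ms}, and the paper's own contribution (Theorem~\ref{t:hyper}) is a result of a different, weaker type. So there is no ``paper's own proof'' to compare against.

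Your proposal is an honest outline of the standard reduction machinery (Harary--Nash-Williams, essential $4$-edge-connectivity, Catlin's reduction, Jaeger's supereulerian theorem), and you correctly identify where it stalls: at vertices of degree~$3$ in the reduced graph, where essential $4$-edge-connectivity does not give genuine $4$-edge-connectivity and the Petersen graph blocks a na\"{\i}ve supereulerian argument. But you then explicitly concede that the decisive step --- pushing the splicing argument from the connectivity~$6$ of Theorem~\ref{t:kv} down to~$4$ --- is exactly the part you do not know how to do. That is not a gap in an otherwise complete proof; it \emph{is} the conjecture. Everything preceding it is well-known preprocessing that has been in the literature for decades, and the ``weak-pairs and closure machinery'' you invoke from Theorem~\ref{t:kv} was designed for, and only achieves, connectivity~$5$--$6$; there is no indication of what new idea would close the remaining gap. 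As written, this is a survey of the known reductions together with an acknowledgement that the problem is open, not a proof proposal.
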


Ryj\'{a}\v{c}ek~\cite{R} proved that Conjectures~\ref{conj:ms}
and~\ref{conj:thomassen} are in fact equivalent. He introduced a
closure technique which shows that for any positive integer $k$, all
$k$-connected claw-free graphs are Hamiltonian if and only if all
$k$-connected line graphs are.

It is natural to ask if an analogue of Theorem~\ref{t:kv} could be
proved for $K_{1,r+1}$-free graphs, where $r\geq 3$. This is not
known, although the question has been around for quite some
time. Jackson and Wormald~\cite[p.~142]{JW} asked whether every
$(r+1)$-connected $K_{1,r+1}$-free graph is Hamiltonian, where
$r\geq 3$. Chen and Schelp~\cite{CS} noted that a conjecture of
Chv\'{a}tal~\cite{Ch} would imply that every $2r$-connected
$K_{1,r+1}$-free graph is Hamiltonian; however, this particular
conjecture of Chv\'{a}tal (`every $2$-tough graph is Hamiltonian') has
since been disproved~\cite{BBV}.

A positive result on a weaker version of the problem for $r=3$ is
established in~\cite{RVW}: $6$-connected $K_{1,4}$-free graphs which,
in addition, contain no induced copy of $K_{1,4}+e$ (the simple graph
with degree sequence $1,1,2,2,4$), are Hamiltonian.

By analogy with claw-free graphs, one might guess that the problem of
Hamiltonicity of $K_{1,r+1}$-free graphs could be reduced to the
special case of line graphs of hypergraphs of rank $r$. This may be
so, but no extension of Ryj\'{a}\v{c}ek's technique that would
accomplish this task is known. Still, line graphs of hypergraphs of
rank $r$ are a natural starting point for an investigation of
$K_{1,r+1}$-free graphs.

Even for this class of graphs, no analogue of Theorem~\ref{t:kv} is
known. The following conjecture has recently been proposed
in~\cite{GLS}:
\begin{conjecture}\label{conj:gls}
  For any $r\geq 2$, there is an integer $\phi(r)$ such that every
  $\phi(r)$-connected line graph of a rank $r$ hypergraph is
  Hamiltonian.
\end{conjecture}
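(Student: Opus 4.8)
The plan is to establish Conjecture~\ref{conj:gls} for every $r\ge 2$ by generalizing the standard line-graph technique: reduce Hamiltonicity of $L(H)$ to the existence of a dominating closed trail in $H$, and then force such a trail out of high connectivity. The starting point is the canonical clique cover of $L(H)$: for each vertex $v$ of $H$, the set $Q_v$ of hyperedges containing $v$ is a clique, every hyperedge lies in at most $r$ of these cliques (as it has at most $r$ vertices), and hence $L(H)$ is $K_{1,r+1}$-free. I would first prove the hypergraph analogue of the Harary--Nash-Williams theorem: $L(H)$ has a Hamilton cycle if and only if $H$ has a \emph{dominating closed trail}, namely a closed trail $e_1v_1e_2v_2\cdots e_kv_ke_1$ with $v_i\in e_i\cap e_{i+1}$, no hyperedge repeated, such that every hyperedge of $H$ contains some $v_i$. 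The forward direction reads the trail off a Hamilton cycle; the backward direction weaves each non-trail hyperedge into the clique $Q_{v_i}$ of a vertex it contains. For $r=2$ this is exactly Harary--Nash-Williams, and the equivalence itself is rank-independent.

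The second step is a connectivity transfer, and it is where $\phi(r)$ is defined. Working in a derived graph $B$ associated with $H$ (for instance its vertex--hyperedge incidence graph), I would show that a separation of $B$ of small order lifts to a vertex cut of $L(H)$ whose size is controlled by $r$: since every hyperedge meets at most $r$ cliques $Q_v$, a cut involving $t$ incidences produces a separation of $L(H)$ of order at most $g(r)\cdot t$ for an explicit function $g$. Contrapositively, $\kappa(L(H))\ge\phi(r)$ forces $B$ (equivalently, the hypergraphic matroid of $H$) to be $f(r)$-edge-connected for any target $f$ we fix in advance, provided $\phi(r)$ is chosen large enough as a function of $r$. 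The dependence of $\phi(r)$ on $r$ enters precisely through this lifting.

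With $f(r)$-edge-connectivity secured, the third step constructs the dominating closed trail. Taking $f(r)$ large enough to supply several edge-disjoint spanning connected substructures by the Nash-Williams--Tutte tree-packing theorem, together with enough slack to correct parities, I would assemble a connected even subgraph that is incident with every hyperedge, using a Catlin-style collapsible reduction to absorb the parity adjustments inside the large cliques. Reading this structure back as a dominating closed trail and applying the equivalence of the first step produces the Hamilton cycle in $L(H)$.

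The hard part will be making the last two steps uniform in $r$. Two difficulties reinforce each other: domination forces the trail through cliques $Q_v$ that can be arbitrarily large, and keeping the subgraph simultaneously connected and even across such cliques demands parity bookkeeping that becomes more intricate as $r$ grows and hyperedges overlap in richer patterns. There is no apparent induction reducing rank $r$ to rank $r-1$ while preserving both the line-graph structure and the connectivity, so each unit of $r$ must be absorbed into the constant $\phi(r)$; controlling this blow-up uniformly over all $H$ is the central obstacle, and it is exactly what makes even the case $r=3$ expensive and the full conjecture hard.
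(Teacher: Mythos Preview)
The statement you were asked to prove is Conjecture~\ref{conj:gls}, and the paper does \emph{not} prove it: it is stated as an open conjecture, and only the special case $r=3$ is established (Theorem~\ref{t:hyper}, with $\phi(3)=52$). Your submission is likewise not a proof but a strategy sketch, and you yourself identify the central obstruction in the final paragraph. So at the level of the full conjecture there is nothing to compare; it remains open for $r\ge 4$.

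Restricting to $r=3$, where a comparison is possible, your outline and the paper's argument share Step~2 (your connectivity transfer is Lemma~\ref{l:20}) but differ in Steps~1 and~3. Your Step~1 characterisation (``$L(H)$ Hamiltonian iff $H$ has a dominating closed trail with no hyperedge repeated'') is close to, but not identical with, what the paper actually uses: Theorem~\ref{t:l3h} works in the incidence graph $\IG\HH$ and asks for a dominating closed $W$-\emph{quasitrail}, which permits certain edges to be traversed twice. This is not cosmetic; it is precisely what makes the paper's final move legal, namely inserting a detour $v,vw,w,wv,v$ to reach each uncovered white vertex. For Step~3 you propose Nash--Williams--Tutte tree packing together with Catlin-style collapsibility; the paper instead applies the $T$-connector packing theorem of DeVos, McDonald and Pivotto (Theorem~\ref{t:dmp}) to the set $T$ of heavy black vertices. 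Two edge-disjoint $T$-connectors, with parity corrected by routing odd-degree pairs of one through the other, already yield a closed trail visiting every heavy vertex (Lemma~\ref{l:trail}); the quasitrail detours then dominate everything else. This sidesteps any collapsibility machinery and is where the constant $52=3\cdot 17+1$ with $18=6\cdot 2+6$ comes from.

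The genuine gap, which you correctly flag, is that for $r\ge 4$ neither your third step nor the paper's detour trick is known to go through: white vertices in $\IG\HH$ can have degree larger than $3$, the quasitrail mechanism of Theorem~\ref{t:l3h} no longer applies as stated, and no substitute has been found. Your plan does not close this gap, and neither does the paper.
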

A stronger form of the conjecture in~\cite{GLS} includes the statement
that $\phi(r) = 2r$ works.

Li et al.~\cite{LORV} recently found a close relation between line
graphs of rank $3$ hypergraphs and
Conjecture~\ref{conj:thomassen}. They showed the latter conjecture to
be equivalent to the statement that every $4$-connected line graph of
a rank $3$ hypergraph is \emph{Hamilton-connected} (that is, for any
two vertices $u,v$, it has a Hamilton path joining $u$ to $v$).

In this note, we establish Conjecture~\ref{conj:gls} in the first open
case, $r=3$. We use a result of DeVos et al.~\cite{DMP} on disjoint
$T$-connectors as our main tool.
\begin{theorem}\label{t:hyper}
  If $G$ is the line graph of a rank $3$ hypergraph and $G$ is
  $52$-connected, then $G$ is Hamiltonian.
\end{theorem}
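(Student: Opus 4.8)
The plan is to translate Hamiltonicity of $G=L(H)$ into the existence of a suitable dominating closed trail in the rank~$3$ hypergraph $H$, and then to build that trail from the connectivity of $G$ using the packing theorem for edge-disjoint $T$-connectors of DeVos et al.\ \cite{DMP}. For the first part I would fix the right combinatorial model: call a cyclic sequence $e_1,v_1,e_2,v_2,\dots,e_m,v_m$ of pairwise distinct hyperedges $e_i\in E(H)$ with $v_i\in e_i\cap e_{i+1}$ (indices mod $m$) a \emph{closed trail} of $H$, with \emph{transition vertices} $v_1,\dots,v_m$. The reduction lemma I aim for is: if no hyperedge of $H$ is contained in another, no hyperedge has all its vertices of degree~$1$, $|E(H)|$ is not tiny, and $H$ has a closed trail $F$ whose set of transition vertices contains every vertex lying on at least two hyperedges, then $L(H)$ is Hamiltonian. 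Its proof is a splicing argument: any hyperedge $f\notin F$ has a vertex $v$ of degree $\ge 2$, this $v$ occurs as a transition of $F$, and $f$ together with all other hyperedges not in $F$ assigned to $v$ is inserted into the cyclic sequence right after an occurrence of $v$. Rank~$3$ helps here, because a rank~$3$ hyperedge may be traversed through any two of its three vertices, which is exactly the freedom needed to route $F$ through all the prescribed transition vertices and to correct parity later. A short case analysis at the outset --- small $H$, hyperedges that are isolated vertices of $L(H)$, hyperedges of rank $\le 2$ --- uses $52$-connectivity to rule out the genuinely degenerate configurations and reduces us to $H$ with every hyperedge of rank exactly~$3$ and $\delta(H)$ large.

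\textbf{From connectivity of $L(H)$ to connectivity inside $H$.} A minimum separator of $G=L(H)$ is a set of hyperedges whose deletion disconnects $E(H)$. Analysing how such a separator meets the incidence structure of $H$, I would show that $52$-connectivity of $G$ forces the set $T_0$ of vertices lying on at least two hyperedges to be highly connected in an auxiliary graph $\Gamma$ on $V(H)$ --- essentially the $2$-section of $H$, with each edge labelled by a witnessing hyperedge so that no hyperedge is reused in routing --- namely, every edge cut of $\Gamma$ separating two vertices of $T_0$ has size at least an absolute constant $c$ not much smaller than $52$. The gap between $52$ and $c$ is the familiar line-graph tax (a small separator of $L(H)$ can shadow a larger obstruction in $H$), and bounding it is a finite but technical check.

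\textbf{Packing connectors and assembling $F$ (the crux).} Since $T_0$ is $c$-connected in $\Gamma$, the theorem of \cite{DMP} yields a bounded number of pairwise edge-disjoint $T_0$-connectors $C_1,C_2,C_3,\dots$ in $\Gamma$, that is, connected subgraphs each containing every vertex of $T_0$. I would lift $C_1\cup C_2$ to $H$ by routing each $\Gamma$-edge through a hyperedge on its two ends, using distinct hyperedges for distinct $\Gamma$-edges; the result is connected, contains all of $T_0$, and has every $T_0$-vertex of degree at least~$2$, so once it is turned into a closed trail it dominates every hyperedge and supplies all the transition vertices required by the reduction lemma. What remains is parity: add a $T$-join supported on a further connector $C_3$, with $T$ the set of odd-degree vertices, to make every degree even while preserving connectivity, and use the rank~$3$ flexibility to turn this into a genuine closed trail $F$ meeting the hypothesis of the lemma. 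Blowing $F$ up as in the first paragraph produces a Hamilton cycle of $L(H)$, and tracking the constants --- the line-graph tax, the constant in \cite{DMP}, and the number of connectors used --- confirms that $52$ suffices; no attempt is made to optimise it.

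\textbf{The main obstacle.} I expect the real difficulty to lie in the interface between the steps rather than in any single step: making the connectors supplied by \cite{DMP} combine into one structure that is simultaneously connected, realizable as a closed trail, dominating, and of degree at least~$2$ at every vertex of $T_0$, all while using each hyperedge of $H$ at most once in the routing. Controlling vertices of $H$ of large degree that a given connector covers only as a leaf, keeping the parity-correcting $T$-join edge-disjoint from everything already committed, and preventing two $\Gamma$-edges from being forced through a single hyperedge are the points where the construction must be engineered with care --- and closing this bookkeeping is precisely what pins down the value of the connectivity constant.
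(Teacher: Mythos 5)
Your overall architecture (reduce Hamiltonicity of $L(H)$ to a dominating closed-trail-like structure in $H$, then build that structure by packing $T$-connectors via \cite{DMP}) is the same as the paper's, but the central connectivity claim on which your construction rests is false as stated. You take $T_0$ to be \emph{all} vertices lying on at least two hyperedges and assert that $52$-connectivity of $L(H)$ forces $T_0$ to be $c$-edge-connected in the $2$-section $\Gamma$ for some $c$ close to $52$. It does not: a vertex $v$ of degree $2$ in $H$ lies in two hyperedges of rank at most $3$, so $v$ has at most $4$ neighbours in $\Gamma$, and the at most $4$ edges at $v$ form a cut separating $v\in T_0$ from the rest of $T_0$ --- while $L(H)$ can perfectly well remain $52$-connected in such a configuration. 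No ``line-graph tax'' computation repairs this; the set $T_0$ is simply the wrong set. The paper's fix is to restrict $T$ to \emph{heavy} vertices (degree at least $18$ in $H$): for those, any edge cut of size less than $18$ leaves a surviving hyperedge at each heavy vertex, which converts the cut into a small vertex cut of $L(H)$, and that is where the hypothesis is actually used. The price is that the trail then only visits heavy vertices, so one needs (i) the minimum-degree consequence of $52$-connectivity ($52>3\cdot 17$) to guarantee that \emph{every} hyperedge contains a heavy vertex, and (ii) a relaxed target structure --- the dominating closed $W$-quasitrail of \cite{LORV}, which permits doubling an edge to make a detour --- so that hyperedges not on the trail can be picked up by a detour from a heavy neighbour. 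Your reduction lemma, which demands that \emph{every} degree-$\geq 2$ vertex be a transition vertex, is both unachievable (by the above) and stronger than needed.

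A second, structural point: the paper works in the bipartite incidence graph $\IG\HH$ rather than the labelled $2$-section $\Gamma$. This is not cosmetic. In your model a single rank-$3$ hyperedge spawns a triangle in $\Gamma$ but can realize only one transition, so you must prevent the connectors from using two $\Gamma$-edges of the same triangle --- a collision you name as an obstacle but do not resolve, and which genuinely can occur in the output of \cite{DMP}. In $\IG\HH$ each hyperedge is a white vertex of degree at most $3$, so a closed trail automatically passes through it at most once and the collision cannot arise; the parity repair is then the standard one (two edge-disjoint $T$-connectors, with a symmetric difference of paths in the second correcting the odd degrees of the first, needing only $18$-edge-connectivity rather than the $24$ your three-connector plan would require). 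I would regard the proposal as containing the right high-level plan but missing the two ideas that make it work: the correct choice of terminal set $T$, and a target structure/model in which domination and single-use of hyperedges come for free.
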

The method easily extends to Hamilton-connectedness, at the price of a
slight increase in the constant (from $52$ to $54$). To keep our
notation and terminology simpler, we prove Theorem~\ref{t:hyper} just
for Hamiltonicity.

\section{Preliminaries}
\label{sec:preliminaries}

Our terminology mostly follows Bondy and Murty~\cite{BM}. Graphs may
contain parallel edges but no loops.

Given a graph $H$, we say that a graph $G$ is \emph{$H$-free} if $G$
contains no induced copy of $H$. \emph{Claw-free} is used as a synonym
for $K_{1,3}$-free.

A \emph{hypergraph} consists of a vertex set $V$ and a multiset $E$ of
\emph{hyperedges}, each of which is a nonempty subset of $V$. The
\emph{rank} of a hypergraph $\HH$ is the maximum cardinality of a
hyperedge of $\HH$. A hypergraph of rank $r$ is also referred to as an
\emph{$r$-hypergraph}.

The \emph{line graph} $L(\HH)$ of a hypergraph $\HH=(V,E)$ has $E$ as
its vertex set, with an edge linking $e$ and $f$ ($e,f\in E$) whenever
$e$ and $f$ intersect. Observe that if $\HH$ has rank $r$, then
$L(\HH)$ is $K_{1,r+1}$-free.

\section{Tools}
\label{sec:tools}

\subsection{$T$-connectors}
\label{ss:conn}

Let $T$ be an arbitrary set of vertices of a graph $G$. We say that
$T$ is \emph{$k$-edge-connected} in $G$ if for any $s_1,s_2\in T$, $G$
contains $k$ edge-disjoint paths from $s_1$ to $s_2$. By Menger's
Theorem, $T$ is $k$-edge-connected if and only if $G$ contains no
edge-cut $X$ such that $\size X < k$ and at least two components of
$G-X$ contain vertices of $T$.

Let $P$ be a path in $G$. Following~\cite{WW}, we define the operation
of \emph{short-cutting} $P$ as deleting all edges of $P$ and then
adding an edge joining the end vertices of $P$. A path in $G$ is a
\emph{$T$-path} if its end vertices belong to $T$ and none of its
other vertices are in $T$. A \emph{$T$-connector} in $G$ is the union
of a family of edge-disjoint $T$-paths in $G$ such that short-cutting
them one by one, we obtain a graph whose induced subgraph on $T$ is
connected. 
Observe that all the vertices of a $T$-connector $C$ whose degree in
$C$ is odd belong to $T$.

DeVos et al.~\cite[Theorem~1.6]{DMP} proved the following result on
edge-disjoint $T$-connectors (see also~\cite{L,WW}).
\begin{theorem}[\cite{DMP}]\label{t:dmp}
  For $k\geq 1$, if $T \subseteq V(G)$ is $(6k+6)$-edge-connected in
  $G$, then $G$ contains $k$ edge-disjoint $T$-connectors.
\end{theorem}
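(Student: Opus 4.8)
The plan is to reduce the problem to packing spanning trees by eliminating the non-terminal vertices through edge-splitting, and then to translate the resulting trees back into $T$-connectors. The engine of the argument will be two classical theorems: Mader's edge-splitting theorem for the reduction, and the spanning-tree packing theorem of Nash-Williams and Tutte for the core counting.

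First I would reduce to the case $T = V(G)$. At a vertex $v \notin T$, Mader's theorem lets one replace a suitable pair of incident edges $uv, vw$ by a single edge $uw$ without decreasing the local edge-connectivity between any two vertices other than $v$. Applying such admissible splittings repeatedly at each non-terminal drives its degree down and eventually isolates it, after which it may be deleted. The outcome is a graph $G'$ with vertex set $T$ in which $T$ is still $(6k+6)$-edge-connected; since now $T = V(G')$, this simply says that $G'$ is $(6k+6)$-edge-connected. Crucially, for each edge created by a splitting I would record the path of $G$ that it represents, so that the whole process can be reversed later. Next, in $G'$ I would invoke the Nash-Williams and Tutte theorem, by which a $2k$-edge-connected graph contains $k$ edge-disjoint spanning trees. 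As $6k+6 \geq 2k$, this yields edge-disjoint spanning trees $S_1,\dots,S_k$ of $G'$. Because every vertex of $G'$ lies in $T$, each $T$-path is a single edge and short-cutting is a no-op, so a connected spanning subgraph of $G'$ is exactly a $T$-connector; in particular each $S_i$ is already a $T$-connector of $G'$.

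I would then lift the $S_i$ back to $G$ by replacing every edge of $S_i$ by the $G$-path it represents, obtaining subgraphs $C_1,\dots,C_k$. Since admissible splittings keep the edge sets of distinct representing paths disjoint, the $C_i$ remain pairwise edge-disjoint. Each $C_i$ uses an even number of edges at every non-terminal (a non-terminal is only ever an internal vertex of a representing path, contributing edges in pairs), so $C_i$ decomposes into edge-disjoint $T$-paths; short-cutting these recovers precisely $S_i$, which is connected on $T$. Hence each $C_i$ is a genuine $T$-connector of $G$, and we are done.

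The main obstacle, and the reason the hypothesis reads $6k+6$ rather than $2k$, is the splitting step. Mader's theorem has exceptions at vertices of degree $3$, and, more seriously, a non-terminal of \emph{odd} degree can never be split off completely: one edge is always left behind, and deleting it erodes exactly the edge-connectivity that the tree-packing step relies on. I would therefore have to interleave the splittings with parity corrections—pairing up or rerouting the leftover edges and disposing of the low-degree exceptional vertices by hand—while certifying at every stage that the edge-connectivity of $T$ never falls below $2k$. Bounding this cumulative loss is where the slack between $6k+6$ and $2k$ is spent, and it is the delicate heart of the argument.
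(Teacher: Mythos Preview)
There is nothing in the paper to compare against: Theorem~\ref{t:dmp} is quoted from~\cite{DMP} as a ready-made tool, and the paper supplies no proof of it.

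Judged on its own, your proposal is an outline with the decisive step missing. You correctly observe that if every non-terminal could be split off while preserving the edge-connectivity of $T$, then Nash-Williams--Tutte would finish the job; but that clean reduction would already yield $k$ edge-disjoint $T$-connectors from mere $2k$-edge-connectivity of $T$, and since a $T$-connector is a connected subgraph containing $T$, this would prove Kriesell's conjecture on packing Steiner trees. That conjecture is open, so the obstruction you flag in your last paragraph---odd-degree non-terminals and the degree-$3$ exception in Mader's theorem---is not a loose end but the entire difficulty. Your text promises ``parity corrections'' and asserts that the cumulative connectivity loss fits within the slack between $6k+6$ and $2k$, yet offers no mechanism and no estimate; as written, nothing prevents that loss from being unbounded. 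The actual arguments in~\cite{DMP} (and the related ones in~\cite{L,WW}) require substantially more than a bare splitting-off reduction followed by spanning-tree packing.
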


\subsection{Hamiltonicity of line graphs of $3$-hypergraphs}
\label{ss:ham}

A well-known result of Harary and Nash-Williams~\cite{HNW}
characterises graphs whose line graph is Hamiltonian. We use an
extension of this result to $3$-hypergraphs, given in~\cite{LORV}. (A
more general extension, valid for all hypergraphs, was found
in~\cite{GLS}.)

Let $\HH$ be a $3$-hypergraph. The \emph{incidence
  graph}\footnote{This graph is denoted by $\Gr\HH$
  in~\cite{LORV}. Since the same symbol is used in~\cite{KV} with a
  slightly different meaning, we opt for the alternative $\IG\HH$.}
$\IG\HH$ of $\HH$ is the bipartite graph with vertex set
$V(\HH)\cup E(\HH)$ and edges of the form $(v,e)$, where
$v\in V(\HH)$, $e\in E(\HH)$ and $v\in e$. The vertices of $\IG\HH$
belonging to $E(\HH)$ are called \emph{white}, the other vertices are
\emph{black}. Note that each white vertex of $\IG\HH$ has degree $2$ or
$3$.

A \emph{closed walk} $Q$ in a graph $G$ is a sequence
$v_0,e_0,v_1,\dots,e_{k-1},v_k$, such that $e_i$ is an edge of $G$
with end vertices $v_i$ and $v_{i+1}$ ($0\leq i \leq k-1$) and
$v_k = v_0$. Each of the vertices $v_i$ is said to be \emph{visited}
by $Q$ (as many times as it appears in $Q$); similarly, an edge $e_i$
is said to be \emph{traversed} by $Q$ (again with possible
multiplicity). A \emph{closed trail} is a closed walk visiting each
edge at most once.

Let $v_i$ be a vertex visited once by the above walk. The
\emph{precedessor edge} of $v_i$ is defined to be $e_{i-1}$ (with
subtraction modulo $k$). Similarly, the \emph{successor edge} of $v_i$
is $e_i$ if $i < k$, and $e_0$ otherwise.

Given an arbitrary set $W$ of vertices of degree $2$ or $3$ in $G$, a
\emph{closed $W$-quasitrail} in $G$ is a closed walk which traverses
each edge at most twice, and if an edge $e$ is traversed twice, then
it has an end vertex $w\in W$ such that $w$ is visited once and $e$ is
both the precedessor edge and the successor edge of $w$. A closed
$W$-quasitrail in $G$ is \emph{dominating} if it visits at least one
vertex in every edge of $G$.

We will use the following characterisation of $3$-hypergraphs with
Hamiltonian line graphs, which follows from~\cite[Corollary~7]{LORV}.

\begin{theorem}[\cite{LORV}]\label{t:l3h}
  Let $\HH$ be a $3$-hypergraph and let $W$ be the set of white
  vertices of its incidence graph $\IG\HH$. The line graph $L(\HH)$ of
  $\HH$ is Hamiltonian if and only if $\IG\HH$ contains a dominating
  closed $W$-quasitrail.
\end{theorem}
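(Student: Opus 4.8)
The plan is to prove this as a hypergraph analogue of the Harary--Nash-Williams characterization of graphs with Hamiltonian line graphs, transferred to the incidence graph $\IG\HH$. The central object is a correspondence between Hamilton cycles of $L(\HH)$ and dominating closed $W$-quasitrails of $\IG\HH$, in which the white vertices visited by the quasitrail form a ``backbone'' cycle and the remaining hyperedges are inserted using the domination property. I would treat the two implications separately.

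The forward implication is the easier one. Given a Hamilton cycle of $L(\HH)$, write it as a cyclic sequence $e_0, e_1, \dots, e_{m-1}$ of all hyperedges with $e_i \cap e_{i+1} \neq \emptyset$ (indices modulo $m$), choose a black vertex $v_i \in e_i \cap e_{i+1}$ for each $i$, and form the closed walk
\[ Q:\ e_0,\, v_0,\, e_1,\, v_1,\, \dots,\, e_{m-1},\, v_{m-1},\, e_0 \]
in $\IG\HH$. Each white vertex $e_i$ is visited exactly once, and the only two incidence edges of $\IG\HH$ at $e_i$ that $Q$ uses are $(v_{i-1},e_i)$ and $(v_i,e_i)$; these coincide precisely when $v_{i-1}=v_i$, in which case $(v_i,e_i)$ is traversed twice and is both the predecessor and successor edge of the once-visited white vertex $e_i$. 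This is exactly the spike allowed in the definition, so $Q$ is a closed $W$-quasitrail, and since every hyperedge is visited, $Q$ dominates every incidence. I expect this direction to be essentially bookkeeping.

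For the reverse implication I would start from a dominating closed $W$-quasitrail $Q$ and build a Hamilton cycle. The key structural lemma, which I would prove first, is that $Q$ visits each white vertex \emph{at most once}. This follows from a degree/parity count: a white vertex $w$ has degree $2$ or $3$ in $\IG\HH$, and each visit of $w$ consumes two incidence-edge endpoints at $w$; a doubled incidence edge at $w$ is, by definition of a quasitrail, a spike at the white vertex $w$, forcing $w$ to be visited exactly once. Hence if $w$ were visited twice, each incident edge would be used at most once, giving at most $3<4$ available endpoints, a contradiction. With this in hand, reading off the white vertices of $Q$ in cyclic order yields a single cycle $C_0$ in $L(\HH)$ through exactly the backbone hyperedges: consecutive backbone hyperedges share the unique intervening black vertex, and a spike $\dots,v,w,v,\dots$ causes no problem since both neighbours of $w$ still contain $v$. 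Finally, each non-backbone hyperedge $f$ has, by domination, all of its black vertices visited by $Q$; picking any $v\in f$, I would insert $f$ (chaining several such hyperedges if needed) into $C_0$ at an occurrence of $v$, replacing a transition from a backbone hyperedge $g$ to $g'$ through $v$ by $g,f,g'$. All consecutive pairs still share $v$, producing a Hamilton cycle of $L(\HH)$.

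The main obstacle I anticipate is isolating and justifying the ``at most once'' lemma together with the correct handling of spikes, since these encode the genuinely hypergraph-theoretic phenomenon absent from the graph case: a backbone hyperedge that meets both of its cyclic neighbours only in a single shared vertex cannot be threaded by an ordinary closed trail of $\IG\HH$ and must instead be traversed by a backtracking spike. Verifying that such spikes interact correctly with both the backbone cycle and the insertion step, and dealing with degenerate small cases ($\size{E(\HH)}\le 2$, or hyperedges meeting no other hyperedge), is where the care is required; the insertion argument itself is a routine adaptation of Harary--Nash-Williams once the backbone is known to be a simple cycle.
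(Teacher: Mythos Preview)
The paper does not prove this theorem: it is quoted as a known result, stated to follow from \cite[Corollary~7]{LORV}, and then used as a black box in the proof of Theorem~\ref{t:hyper}. There is therefore no proof in the paper to compare your proposal against.

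That said, your outline is the natural direct argument and appears correct. Two points are worth making explicit if you write it out. First, in your key lemma for the reverse direction, the reason a doubled edge at a white vertex $w$ forces $w$ to be visited exactly once is that $\IG\HH$ is bipartite, so the white endpoint of any doubled edge is the \emph{only} candidate for the $W$-vertex in the quasitrail condition; hence two visits to $w$ would require four distinct incident edges at a vertex of degree at most~$3$. Second, for the insertion step you should spell out that bipartiteness forces $Q$ to alternate colours, so every visited black vertex $v$ actually sits between two (distinct) backbone white vertices in $Q$; this is what guarantees that your chosen $v\in f$ corresponds to a genuine transition $g\to g'$ in $C_0$ into which $f$ (and any chain of further hyperedges through $v$) can be spliced. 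With those two observations made explicit, and the small degenerate cases you already flag handled separately, the argument goes through.
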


As remarked above, hypergraphs of arbitrary rank whose line graph is
Hamiltonian were recently characterised in~\cite{GLS}.

\section{Proof of Theorem~\ref{t:hyper}}
\label{sec:proof}

Let $L(\HH)$ be the line graph of a $3$-hypergraph $\HH$. Suppose that
$L(\HH)$ is $52$-connected; in fact, it is enough if $L(\HH)$ is
$18$-connected and its minimum degree is at least $52$. We prove that
$L(\HH)$ is Hamiltonian.

Consider the graph $\IG\HH$ and recall that every vertex of $\HH$ is a
black vertex of $\IG\HH$. Let us call such a vertex of $\IG\HH$
\emph{heavy} if its degree is at least $18$. Since the minimum degree
of $L(\HH)$ is at least $52 > 3 \cdot 17$, every hyperedge of $\HH$
contains a heavy vertex. Therefore, every white vertex of $\IG\HH$ is
adjacent to a heavy vertex. Let $T$ be the set of heavy vertices of
$\IG\HH$.

\begin{lemma}\label{l:20}
  The set $T$ is $18$-edge-connected in $\IG\HH$.
\end{lemma}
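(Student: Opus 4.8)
The plan is to argue by contradiction, turning a hypothetical cheap edge-cut of $\IG\HH$ that separates two heavy vertices into a cheap vertex-cut of $L(\HH)$ --- which is impossible, since $L(\HH)$ is $18$-connected. So suppose $T$ is not $18$-edge-connected. By Menger's theorem there are heavy vertices $s_1,s_2$ and a partition $V(\IG\HH)=S\cup\bar S$ with $s_1\in S$ and $s_2\in\bar S$ for which the set $X$ of edges of $\IG\HH$ running between $S$ and $\bar S$ has size at most $17$. Let $E_S$ and $E_{\bar S}$ be the sets of white vertices (that is, hyperedges of $\HH$) lying in $S$ and in $\bar S$; these partition $V(L(\HH))$. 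Let $V'$ be the set of black endpoints of the edges of $X$, and put $V_S=V'\cap S$ and $V_{\bar S}=V'\cap\bar S$.

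The heart of the argument is the construction of the vertex-cut. I would take $Z=\{f\in E_{\bar S}:f\cap V_S\ne\emptyset\}\cup\{e\in E_S:e\cap V_{\bar S}\ne\emptyset\}$ and show that $Z$ is a vertex-cut of $L(\HH)$ with $\size Z\le 17$. For the size bound: for a fixed $v\in V_S$, a hyperedge through $v$ lies in $E_{\bar S}$ precisely when the corresponding incidence belongs to $X$, so $v$ is contained in exactly $c(v)$ members of $E_{\bar S}$, where $c(v)$ denotes the number of edges of $X$ at $v$; together with the symmetric statement for $V_{\bar S}$ and the fact that every edge of $X$ has a single black endpoint, this yields $\size Z\le\sum_{v\in V'}c(v)=\size X\le 17$. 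That $Z$ separates $L(\HH)$: any edge of $L(\HH)$ joining some $e\in E_S$ to some $f\in E_{\bar S}$ passes through a vertex $v$ for which $(v,e)$ or $(v,f)$ crosses the partition, hence $v\in V'$; if $v\in V_S$ then $f\in Z$, and if $v\in V_{\bar S}$ then $e\in Z$. Consequently $L(\HH)-Z$ has no edge between $E_S\setminus Z$ and $E_{\bar S}\setminus Z$.

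What is then left is to check that $E_S\setminus Z$ and $E_{\bar S}\setminus Z$ are nonempty, and this is the only place where membership of $s_1,s_2$ in $T$ is used. Consider $s_1$ and let $N_1$ be the set of hyperedges containing it, so that $\size{N_1}\ge 18$ by heaviness. Exactly $c(s_1)$ members of $N_1$ lie in $E_{\bar S}$, so at least $18-c(s_1)$ lie in $E_S$; and a member of $N_1$ lying in $E_S\cap Z$ must contain a vertex $v\in V_{\bar S}$ with $v\ne s_1$, which yields a distinct edge of $X$ not incident to $s_1$, so at most $\size X-c(s_1)$ of them lie in $Z$. Hence at least $18-\size X\ge 1$ hyperedges of $N_1$ lie in $E_S\setminus Z$; symmetrically $E_{\bar S}\setminus Z\ne\emptyset$. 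Therefore $Z$ is a vertex-cut of $L(\HH)$ of size at most $17$, contradicting the $18$-connectivity of $L(\HH)$, and the lemma follows.

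I expect the main difficulty to be arriving at the right $Z$: the naive choice --- all hyperedges through a black endpoint of $X$ --- can have size far exceeding $\size X$, so one must single out the hyperedges that genuinely switch sides with respect to such a vertex, which is exactly what allows each of them to be charged to its own edge of $X$. The nonemptiness argument is short bookkeeping, but it is indispensable, and it is the point at which the degree threshold in the definition of $T$ is actually needed.
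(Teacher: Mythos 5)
Your proof is correct and follows essentially the same route as the paper: both arguments convert a hypothetical small edge-cut $X$ of $\IG\HH$ separating two heavy vertices into a vertex-cut of $L(\HH)$ of size at most $\size X$, and your set $Z$ coincides exactly with the paper's cut $X'$, namely the set of white endpoints of the edges of $X$. The paper's bookkeeping is slightly lighter (it bounds $\size{X'}\le\size X$ directly and notes that at most $17$ of the at least $18$ hyperedges through $s_i$ can lie in $X'$), but the substance is identical.
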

\begin{proof}
  For the sake of a contradiction, consider an edge-cut $X$ in
  $\IG\HH$ of size less than $18$ that separates two vertices
  $s_1,s_2\in T$. Each edge $e$ of $\IG\HH$ corresponds to a hyperedge
  $e'$ of $\HH$; let $X' \subseteq E(\HH)$ be the set of the (fewer
  than $18$) corresponding hyperedges for the edges in $X$. Removing
  the hyperedges in $X'$, we separate $s_1$ from $s_2$ in $\HH$. We
  claim that $X'$ is a vertex cut in $L(\HH)$; to prove this, we need
  to show that at least two components of $\HH-X'$ contain at least
  one hyperedge each.

  But this is not hard. Since $s_1$ is heavy, it is incident with at
  least $18$ hyperedges in $\HH$, and at most $17$ of these hyperedges
  can be in $X'$. Thus, at least one hyperedge $e_1$ containing $s_1$
  is a hyperedge of $\HH-X'$. Similarly, there is a hyperedge
  $e_2\notin X'$ containing $s_2$. Then, in $L(\HH)$, $e_1$ and $e_2$
  are two vertices separated by the vertex cut $X'$ of size less that
  $18$, so $L(\HH)$ is not $18$-connected contrary to the assumption.
\end{proof}

\begin{lemma}\label{l:trail}
  The graph $\IG\HH$ contains a closed trail visiting every vertex in
  $T$.
\end{lemma}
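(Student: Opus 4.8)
The goal is to find a single closed trail in $\IG\HH$ that passes through every heavy vertex. By Lemma~\ref{l:20}, the set $T$ is $18$-edge-connected in $\IG\HH$, and $18 = 6\cdot 2 + 6$, so Theorem~\ref{t:dmp} (with $k=2$) gives us two edge-disjoint $T$-connectors $C_1$ and $C_2$ in $\IG\HH$. The plan is to combine $C_1$ and $C_2$ into the desired closed trail. The key point is the parity observation recorded just after the definition of $T$-connector: every vertex of $C_i$ whose degree in $C_i$ is odd must lie in $T$. Hence in the edge-disjoint union $C_1\cup C_2$, every vertex has even degree except possibly vertices of $T$, and a vertex of $T$ has odd degree in $C_1\cup C_2$ only if it has odd degree in exactly one of $C_1,C_2$.

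First I would verify that $C_1\cup C_2$, as a subgraph of $\IG\HH$, has all degrees even. The subtlety is that a priori a vertex $s\in T$ could have odd degree in $C_1$ and even degree in $C_2$ (or vice versa), making its degree in $C_1\cup C_2$ odd. To rule this out I would use a counting/parity argument on each connector separately: in a $T$-connector $C$, the number of vertices of $T$ having odd degree in $C$ is even (this is just the handshake lemma applied to $C$, since all odd-degree vertices lie in $T$). That alone does not force a given $s$ to have even degree. So instead I would argue more carefully — perhaps by modifying the connectors, or by pairing up the odd-degree $T$-vertices of $C_1\cup C_2$ and routing additional edge-disjoint paths between them through $\IG\HH$ using the remaining connectivity, or by invoking a slightly stronger extraction from $T$-connectors. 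Concretely, if $C_1\cup C_2$ has a set $S\subseteq T$ of vertices of odd degree, then $|S|$ is even, and I would add an $S$-join (a set of edge-disjoint paths pairing up $S$) to fix the parity; this is where I would spend the extra connectivity budget, which is precisely why the hypothesis is $18$-edge-connectivity rather than $12$.

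Once I have a subgraph $D$ of $\IG\HH$ with all degrees even, containing $T$ in its vertex set, and such that $D$ is connected (this is where the ``short-cutting yields a connected graph on $T$'' property of each $T$-connector is used — after short-cutting all $T$-paths of $C_1$, the induced subgraph on $T$ is connected, and the same holds for $C_2$; combining, and noting every white vertex is adjacent to some heavy vertex, the whole of $D$ lies in one component, or can be made to), an even connected subgraph has an Eulerian closed trail by Euler's theorem. That Eulerian closed trail traverses each edge of $D$ exactly once and visits every vertex of $D$, in particular every vertex of $T$. This is the closed trail claimed by the lemma.

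The main obstacle I anticipate is the parity bookkeeping in the middle step: ensuring that $C_1\cup C_2$ (possibly after adding a small correction) is an even subgraph while remaining connected on $T$. Handling the odd-degree heavy vertices without breaking connectivity — and doing so within the available edge-connectivity — is the crux; everything else (applying Theorem~\ref{t:dmp}, invoking Euler's theorem) is routine. I would also need to double-check that ``connected'' for the purposes of Euler's theorem is genuinely achieved: the short-cutting condition gives connectivity of the image on $T$, and since each $T$-path has its interior vertices attached to its endpoints in $T$, the original connector $C_i$ is connected as a graph (after discarding isolated vertices), so $C_1\cup C_2$ is connected provided $C_1$ and $C_2$ share a vertex of $T$ — which they do, as $T\neq\emptyset$ and both connectors span $T$ in the short-cut sense.
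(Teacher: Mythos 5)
Your overall strategy matches the paper's: apply Theorem~\ref{t:dmp} with $k=2$ to Lemma~\ref{l:20} to get two edge-disjoint $T$-connectors, massage their union into a connected even subgraph containing $T$, and finish with Euler's theorem. You also correctly identify the crux --- a vertex of $T$ may have odd degree in exactly one of the two connectors, so the plain union $C_1\cup C_2$ need not be even. But you do not actually close this gap: you list several possible fixes (``modifying the connectors'', ``routing additional edge-disjoint paths \dots\ using the remaining connectivity'', ``a slightly stronger extraction'') without carrying any of them out, and the one you lean on is based on a miscount. There is no ``extra connectivity budget'': Theorem~\ref{t:dmp} requires $(6k+6)$-edge-connectivity, which for $k=2$ is exactly $18$, so the hypothesis of Lemma~\ref{l:20} is fully consumed by producing the two connectors. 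Moreover, after deleting the edges of $C_1\cup C_2$, the residual graph has no guaranteed connectivity at all, so there is no reason edge-disjoint paths pairing up the odd-degree set $S$ can be routed there.

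The resolution is that the parity correction lives \emph{inside the second connector}, not outside both. Keep $A_1$ in its entirety and let $B\subseteq T$ be its set of odd-degree vertices ($\size B$ is even by handshaking). Since $A_2$ is a $T$-connector, any two vertices of $T$ are joined by a path in $A_2$; pair up $B$ arbitrarily, choose such a path in $A_2$ for each pair, and let $D$ be the symmetric difference of these paths. A parity count shows the odd-degree vertices of $D$ are exactly $B$, so $A_1\cup D$ (with $A_1$ and $D$ edge-disjoint) has all degrees even, contains $T$, and is connected through $A_1$; Euler's theorem then gives the closed trail. Your closing remarks on connectivity are essentially fine (both connectors contain all of $T$, and the adjacency of white vertices to heavy ones is not needed here), but without the symmetric-difference step the proof is incomplete.
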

\begin{proof}
  By Lemma~\ref{l:20} and Theorem~\ref{t:dmp}, $\IG\HH$ contains two
  edge-disjoint $T$-connectors, say $A_1$ and $A_2$. It is a standard
  observation that $A_1\cup A_2$ contains a connected subgraph $C$
  with all degrees even such that $C$ covers all vertices in $T$. To
  prove it, let $B$ be the set of vertices of $A_1$ with odd degree in
  $A_1$. Then $\size B$ is even, and by the definition of
  $T$-connector, $B\subseteq T$. We partition $B$ in pairs
  arbitrarily, and join each of the pairs by a path in $A_2$. The
  symmetric difference $D$ of all these paths is a subgraph of $A_2$,
  and by a simple parity argument, the set of odd degree vertices of
  $D$ is precisely $B$. Now $A_1\cup D$ is the desired subgraph
  $C$. Since every vertex of $\IG\HH$ has even degree in $C$ and $C$
  is connected, there is a closed trail traversing precisely the edges
  in $C$.
\end{proof}

Let $R$ be a closed trail obtained from Lemma~\ref{l:trail}. We aim to
use Theorem~\ref{t:l3h} to prove that $R$ gives rise to a Hamilton
cycle in $L(\HH)$. Let $W$ be the set of white vertices of
$\IG\HH$. We now construct a closed dominating $W$-quasitrail from
$R$.

Consider a white vertex $w$ of $\IG\HH$ not visited by $R$. The vertex
$w$ is adjacent to a heavy vertex $v$, and every heavy vertex is
traversed by $R$. Let us insert in $R$ a detour to $w$ immediately
following the visit to $v$. That is, an occurrence of $v$ in $R$ will
be changed to $v,vw,w,wv,v$. Repeating this operation for each
unvisited white vertex (choosing one heavy neighbour arbitrarily if
there are more than one), we obtain a closed walk visiting each white
vertex, and therefore dominating all edges of $\IG\HH$. In fact, the
resulting walk is a closed dominating $W$-quasitrail, so
Theorem~\ref{t:l3h} implies that $L(\HH)$ is Hamiltonian. The proof is
complete.

\section*{Acknowledgment}

We thank two anonymous reviewers for their helpful comments.

\end{document}